\newtheorem{theorem}{Theorem}[section]
\newtheorem{lemma}[theorem]{Lemma}
\theoremstyle{plain}
\newtheorem{proposition}{Proposition}[section]
\numberwithin{equation}{section}
\begin{document}
\title[Base Change and Theta Correspondences]{Base Change and Theta
Correspondences for Supercuspidal Representations of \textit{SL}(2)}
\author{David Manderscheid}
\address{Office of the Dean\\
College of Arts and Sciences\\
1223 Oldfather Hall\\
University of Nebraska - Lincoln\\
Lincoln, NE 68588}
\email{dmanderscheid2@unl.edu}
\date{}
\subjclass{Primary 22E50; Secondary 11F70}
\keywords{base change, supercuspidal, theta-correspondence}

\begin{abstract}
Let $F$ be a $p$-adic field with $p$ odd. Quadratic base change and
theta-lifting are shown to be compatible for supercuspidal representations
of $SL(2,F)$. The argument involves the theory of types and the
lattice-model of the Weil representation.
\end{abstract}

\maketitle

%

%


\section*{Introduction}

Two commonly used methods to construct automorphic representations of
number-theoretic interest are theta-lifting and functoriality.
The extent to which these methods are compatible has proved a
fruitful avenue for research. In this paper we consider quadratic base
change for supercuspidal representations of $p$-adic $SL(2)$ as a theta-correspondence.

The result we obtain, that functoriality and theta-lifting are compatible in
this case while new and significant, is certainly not surprising given our
previous work \cite{M1, M2, M3, M4, M5}, that of Cognet \cite{C}, and the
literature, e.g., \cite{A, Au}. On the other hand the proof we give is 
surprisingly simple. It combines a number of known results and one new idea.
The idea is to blend various lattice models of the Weil
representation to give explicit models of the relevant types in the sense of
Bushnell-Kutzko. This new technique should prove of
value in a wide range of settings.

We now give a precise statement of the result and a description of the
method used to prove it. In particular, let $F$ be a $p$-adic field with $p$
odd, let $E/F$ be a quadratic extension, and let $G = G(F) = SL_2 (F)$. Let $%
O $ be the orthogonal group attended to a quadratic form on a four-dimensional vector space $V$ over $F$ 
and of Witt rank one over $F$. Then $O(V,F)=O(V)$ contains $PSL_2
(F)$ as a normal subgroup of index eight (see, e.g., Section 4). Now consider the reductive dual
pair $(G,O(V))$ and let $\chi$ be a non-trivial additive character of $F$. Let 
$w^{\infty} _\chi$ denote the smooth Weil representation associated to $(G,O(V))$ 
and $\chi$. Then the cocycle associated to $w^\infty _\chi$ splits on $%
G\cdot O(V)$ as a subgroup of the ambient symplectic group and the
restriction of $\omega^\infty _\chi$ to $G\cdot O (V)$ gives rise to a map $\theta
_\chi :\mathcal{R}_\chi (G)\rightarrow \mathcal{R}_\chi (O (V))$ where $\mathcal{R}_\chi
(G)$ is the set of irreducible smooth representations of $G$ that occur as
quotients of $\omega^\infty _\chi |_G$ and $\mathcal{R}_\chi (O (V))$ is defined
similarly. For $\pi$ a representation of $G$, let $L(\pi)$ denote its
associated $L$-packet and let $bc(L(\pi))$ be the $L$-packet for $%
PSL_2 (E)$ obtained from that of $L(\pi)$ by base change with respect to $E$%
. Then our main result is

\begin{theorem}
Let $\pi$ be an irreducible supercuspidal representation in $\mathcal{R}_\chi (G)$%
. Suppose that $bc(L(\pi))$ consists of supercuspidal
representations. Then $\theta _\chi (\pi)|_{{PSL_{2}} {(E)}}$ consists of
representations in $bc(L(\pi))$.
\end{theorem}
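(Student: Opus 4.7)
The plan is to combine Bushnell--Kutzko type theory with explicit lattice models of the Weil representation, following the strategy flagged in the introduction. On the $SL_2(F)$-side, $\pi$ is compactly induced from a type $(J,\tau)$ on a compact-mod-center open subgroup $J$, with underlying stratum data coming from a character $\xi$ of a maximal torus of $G$. On the $PSL_2(E)$-side, the hypothesis that $bc(L(\pi))$ consists of supercuspidals means each member is of the form $c\text{-}\mathrm{Ind}_{J'}^{PSL_2(E)}\tau'$ for a type $(J',\tau')$ whose stratum data is governed by the lifted character $\xi\circ N_{E/F}$, via the known description of base change for supercuspidals of $SL_2$.

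Next I would realize $\omega^\infty_\chi$ in the lattice model attached to a self-dual $\mathfrak{o}_F$-lattice $L$ in the symplectic space $V\otimes_F F^2$. The crucial step---and the new idea advertised in the introduction---is to choose $L$ simultaneously adapted to the $J$-structure on the $SL_2(F)$-factor and to an $\mathfrak{o}_E$-structure on $V$ coming from the embedding $PSL_2(E)\hookrightarrow O(V)$, so that both descriptions coexist in a single model. Once this blended lattice is in hand, the $\tau$-isotypic component of $\omega^\infty_\chi|_J$ becomes explicitly computable as a module for a compact-open subgroup $J''\subset PSL_2(E)$ stabilizing the $\mathfrak{o}_E$-piece of $L$, and the resulting $J''$-representation $\tau''$ can be read off directly from the Weil cocycle on $L$. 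Frobenius reciprocity then places $c\text{-}\mathrm{Ind}_{J''}^{PSL_2(E)}\tau''$ inside $\theta_\chi(\pi)|_{PSL_2(E)}$.

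I would conclude by matching $(J'',\tau'')$ with the base-change type $(J',\tau')$ up to $PSL_2(E)$-conjugacy. At the level of stratum data this is essentially forced by the character relation $\xi\mapsto\xi\circ N_{E/F}$, and at the level of compact subgroups it follows from the compatibility between the $\mathfrak{o}_E$-structure on $V$ and the $E$-structure used to construct supercuspidals of $PSL_2(E)$. Uniqueness of types then places every constituent of $\theta_\chi(\pi)|_{PSL_2(E)}$ inside $bc(L(\pi))$. The main obstacle will be the blending step together with the cocycle bookkeeping: one must verify that an $\mathfrak{o}_F$-self-dual lattice adapted to both structures exists, that the Weil cocycle splits on $J\cdot J''$ with exactly the right twist, and that the multiple constituents that may arise in $\theta_\chi(\pi)|_{PSL_2(E)}$---reflecting the fact that $[O(V):PSL_2(F)]=8$ and that $O(V)/PSL_2(E)$ acts nontrivially on types---are accounted for precisely by the members of $bc(L(\pi))$.
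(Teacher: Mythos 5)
Your outline captures the advertised strategy (types plus a blended lattice model) but it leaves unresolved exactly the step the paper's proof is built to handle, and it omits a case the paper must treat separately. The acknowledged ``main obstacle'' --- computing the $\tau$-isotypic component of $\omega^\infty_\chi|_J$ as a module for a compact open subgroup of $PSL_2(E)$ directly in a single lattice model for the eight-dimensional symplectic space $V\otimes F^2$ --- is not a bookkeeping issue but the entire content of the argument, and the paper does not attack it head-on. Instead it first invokes Waldspurger's dichotomy to place $\tilde\pi$ in the theta correspondence with a \emph{ternary} orthogonal group $O(V_1)$ (split) or $O(V_2)$ (anisotropic), where the correspondence of types is already known explicitly (Theorem \ref{theo3.3}, resting on \cite{M1, M6}): the pair of types $\tilde\rho(\mathcal{A},\lambda,\alpha)\otimes\rho(\mathcal{A},\lambda^2,\alpha/2)$ is realized on functions supported in $W_1\otimes V_1$ for a lattice $M\otimes L_1$. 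The ``blending'' then consists of embedding the ternary space into the four-dimensional space via $V_1\cong V_1'\subset V$, choosing the lattice for the big Weil representation to be $M\otimes(\phi(L_1)\oplus L_2)$ with $\phi(L_1)$ and $L_2$ orthogonal, and reading off the parameters from Proposition \ref{prop3a}. Without this intermediate ternary correspondence you have no explicit model of the type to transport, and your claim that $\tau''$ ``can be read off directly from the Weil cocycle on $L$'' amounts to redoing, from scratch and for a larger dual pair, the computations that the cited earlier work supplies.

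Two further concrete omissions: first, your argument tacitly assumes $\tilde\pi$ participates in the split ternary correspondence; when it instead lifts only to the anisotropic $O(V_2)$, the paper must rescale the quadratic form on $V$ by a scalar before the argument applies, and your proposal has no analogue of this case division. Second, the identification of $bc(L(\pi))$ with explicit types (your character relation $\xi\mapsto\xi\circ N_{E/F}$) is not something you can take as given in this framework; it is Lemma \ref{Lem4.1} in the paper and requires the conductor computation with $\chi_E=\chi_F\circ Tr_{E/F}$ and the inputs from \cite{P} and \cite{Sh}, including the passage from $\alpha$ to $\alpha/2$ in the minimal element parameter --- a factor your sketch would need to produce and match against the $\alpha/2$ appearing in the theta lift. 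You have correctly identified the shape of the proof, but the load-bearing steps are precisely the ones deferred.
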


As commented previously, the proof involves bringing together a number
of known results and a relatively simple new idea. The new idea involves 
the theta correspondences attached to the pair $(G,O(V_1))$ where $V_1$ is
the vector space attached to a split ternary isotropic quadratic form. We
use these correspondences, as studied in \cite{M1, M5}, to explicitly
realize models for types in $PSL_2 (F)$ as a subgroup of $O(V_1)$. These
explicit models along with a compatible choice of lattice models for the
Weil representations attached to $(G,O(V_1))$ and $(G,O(V))$ then allow us
to complete the proof with a few straightforward calculations.

This paper is organized as follows. In the first section we set notation and
parametize the irreducible supercuspidal representations of $G$ and 
its nontrivial two-fold cover $\tilde{G}$ by types. In
the second section we parametize the irreducible supercuspidal
representation of $O(V_1)$. In the third section we recall relevant results
for the theta-correspondences attached to $(G,O(V_1))$. In the final section
we prove the main theorem. 

\section{Notation and Parameters}

In this section we establish notation and recall the parametrization, via
types, of the irreducible supercuspidal representations of $G$ and the 
genuine irreducible supercuspidal representations of $\tilde{G}$, with, in 
the case of $\tilde{G}$, genuine meaning that they do not factor to $G$. Since this
material is known (see, e.g., \cite{M1}) we will be brief in our discussion.

As in the introduction, let $F$ be a nonarchimedean local field and let $p$
denote the residual characteristic of $F$. For the remainder of this paper,
we assume $p\neq 2$. Let $\mathcal{O}=\mathcal{O}_{F},P=P_{F},\varpi =\varpi_{F}
,k=k_{F},q=q_{F}$ and $\left\vert\;\right\vert = \left\vert\;\right\vert_F$ 
denote, respectively, the ring of integers,
the prime ideal, a uniformizing parameter, the residue field, the order of
the residue field and the absolute value on $F$ normalized so that $%
|x|=q^{-\nu (x)}$ where $\nu =\nu _F$ denotes the order function on $F$. Let 
$U=U_F =\mathcal{O}_{F} ^\times$ and $U^n =U_{F} ^n =1+P_{F} ^n$ for $n$ a
positive integer. Now suppose that $K/F$ is a field extension. If $K/F$ is
Galois, let $\Gamma(K/F)$ denote the associated Galois group and if, in
addition, $[K:F]$ is finite, let $N_{K/F} =N$ denote the norm map and let $%
K^1 =K_{F} ^1$ be the group of norm one elements in $K^\times$.

For $G$ a group and $\sigma$ a representation of a subgroup $H$, let \linebreak 
$\mathrm{Ind}(G,H;\sigma)$ denote the representation of $G$ induced by $\sigma$ (form of
induction will either be specified or determined by context) and for $g$ in $%
G$, let $\sigma ^g$ denote the representation of $H^g =gHg^{-1}$ defined by $%
\sigma ^g (h)=\sigma (g^{-1} hg)$ for $h$ in $H^g$. If $J$ is a subgroup of $%
H$, let $\sigma |_J$ denote the restriction of $\sigma$ to $J$. Further, if $%
J\unlhd H$ and $\bar{\sigma}$ is a representation of $H/J$, then we view $%
\bar{\sigma}$ as a representation of $H$ via inflation. By a character we
mean a (not necessarily unitary) one-dimensional representation.

We now turn to the parametrization of the irreducible supercuspidal
representations of $G=SL_{2}(F)$ and $\tilde{G}$. Let $V$ be an $F$-vector
space of dimension two equipped with a skew-symmetric bilinear form $\langle
\; ,\,\rangle$. Then $G$ is the isometry group of $\langle \; ,\,\rangle$. Now
suppose that $K$ is a subfield of $A=A_{F}(V)=\mathrm{End}_{F}(V)$ properly
extending $F$ and hence quadratic over $F$. Then $K^1$ is contained in $G$
and we may view $V$ as an $K$-vector space. Since $V$ is one-dimensional
over $K$, there is a unique (up to equivalence) $\mathcal{O}_K$-lattice
chain $L$, say, in $V$. $L$ is also an $\mathcal{O}_F$-lattice chain of
length $e(K/F)$, the ramification degree of $K/F$.

Let $A=\mathcal{A}_F$ and $\mathcal{A}_K$ denote the principal orders in 
$A_{F}(V)$ and $A_{K}(V)$, respectively, associated to $L$. For $n$ an integer, 
let $\mathcal{P}_{F}^{n}=\{x\in A_{F}(V)\, | \, xL_{i}\subseteq 
L_{i+n}\mathrm{\, for\, all\,}i\}$; note that $\mathcal{P}_{F}^{0}=\mathcal{A}_F$. Similarly define 
$\mathcal{P}_{K}^n$. Recall that $\mathcal{P}_{F}=\mathcal{P}_{F}^1$ and $\mathcal{P}_{K}=\mathcal{P}_{K}^1$ are the Jacobson 
radicals of $\mathcal{A}_{F}$ and $\mathcal{A}_{K}$, respectively, and are invertible principal 
fractional ideals in $\mathcal{A}_{F}$ and $\mathcal{A}_{K}$ respectively. Recall also that 
$\mathcal{P}_{F}^{n}\cap A_{K}=\mathcal{P}_{K}^n$. We set $U(\mathcal{A}_{F})=U^{0}(\mathcal{A}_{F})=\{x\in G|x\in 
\mathcal{A}_{F}^{\times}\}$ and, for $n>0$, $U^{n}(\mathcal{A}_{F})=\{ x\in G\, | \, x-1\in \mathcal{P}_{F}^{n}\}$ 
and similarly for $K$. Then $U^{n}(\mathcal{A}_{F})\cap A_{K}=U^{n}(\mathcal{A}_{K})$ and if we 
identify $V$ with $K$ and thus $L$ with the fractional ideals on $K$ we get 
$U^{n}(\mathcal{A}_{K})=K_{n}^{1}=K^{1}\cap U_{K}^{n}$ for $n$ positive.

The usual trace map $tr=tr_{A/F}$ gives rise to a non-degenerate bilinear
form on $A_{F}$. We fix a nontrivial additive character $\chi$ of $F$. Let $%
k^{\prime }$ denote the subfield of $k$ of cardinality $p$ and let $\chi
_{k^{\prime }}$ be the additive character of $k^{\prime }$ with the property
that $\chi _{k^{\prime }}(1)=e^{2\pi i/p}$. Then we require that $\chi$
factors to the character $\chi _{k}$ of $k$ defined by $\chi _{k^{\prime
}}\circ Tr_{k/k^{\prime }}$. Note that $\chi$ has conductor $P_F$.

We say that $\alpha$ in $K$ is $K/F$-minimal if $K=F[\alpha ],Tr_{k/F}(\alpha )=0,\nu _{K}(\alpha )<0$,$(\nu _{K}(\alpha )$,$e(K/F))=1$, and if 
\begin{equation*}
\mathcal{O}_{F}[N_{K/K_{ur}}(\alpha )/\varpi_{F}^{\nu _{K}(\alpha )}]=%
\mathcal{O}_{K_{ur}}
\end{equation*}%
where $K_{ur}/F$ is the maximal unramified extension intermediate to $K/F$.
Note that this definition of minimal is more restrictive than the usual
definition, see, e.g., \cite{BK}, but suffices for our purposes.

Now let $\Lambda =\Lambda (K)$ denote the set of characters of $K^1$ and let 
$\Lambda _{1}$ denote the set of characters of $K^1$ trivial on $K^{1}\cap
U_{K}^1$. Let $\alpha$ in $K$ be $K/F$-minimal. Set $n=-\nu _{K}(\alpha)$ and 
$m^{\prime }=[(n+2)/2]$, where [\quad ] denotes the greatest integer
function. Define $\chi _\alpha$, a character on $U^{m^{\prime }}(\mathcal{A}_F)$, by 
$\chi _{\alpha}(1+x)=\chi (tr(\alpha x))$. Let $\Lambda _\alpha$ denote the
set of $\lambda$ in $\Lambda$ which agree with $\chi _\alpha$ upon
restriction to $U^{m^{\prime }}(\mathcal{A}_K)$. Then to each $\lambda$ in $\Lambda
_\alpha$ we may associate a character $\rho ^{\prime }(A,\lambda ,\alpha )$
of $K^{1}U^{m^{\prime }}(\mathcal{A}_F)$ in the usual manner.

Let $m=[(n+1)/2]$. If $n$ is odd, then $m=m^{\prime }$ and we set $\rho
(A,\lambda ,\alpha )=\rho ^{\prime }(A,\lambda ,\alpha )$. Then $\pi
(A,\lambda ,\alpha )=\mathrm{Ind}(G,K^{1}U^{m}(\mathcal{A}_{F});\,\rho (\mathcal{A},\lambda
,\alpha ))$ is an irreducible supercuspidal representation of $G$. If $n$ is
even (and thus $K/F$ is unramified), then $m=m^{\prime }-1$. Then there
exists a $q$-dimensional representation $\rho (\mathcal{A},\lambda ,\alpha )$ of 
$K^{1}U^{m}(\mathcal{A}_F )$ with the following properties.

\begin{enumerate}
\item The representations occurring in the decomposition of \linebreak $\mathrm{Ind}%
(K^{1}U^{m}(\mathcal{A}_{F}),K^{1}U^{m^{\prime }}(\mathcal{A}_{F})$; $\rho ^{\prime
}(\mathcal{A},\lambda ,\alpha ))$ are the representations $\rho \left( \mathcal{A},\lambda
^{\prime },\alpha \right) $ with $\lambda ^{\prime }\lambda ^{-1}$ in 
$\Lambda_1$, and $\lambda ^{\prime }\lambda ^{-1}\left( -1\right) =1$. These
representations occur with multiplicity two with the exception of $\rho
\left( \mathcal{A},\lambda ,\alpha \right) $ which occurs with multiplicity one.

\item $\pi \left( \mathcal{A},\lambda ,\alpha \right) =\mathrm{Ind}\left(
G,K^{1}U^{m}\left( \mathcal{A}_{F}\right) ;\rho \left( \mathcal{A},\lambda ,\alpha \right)
\right) $ is an irreducible representation of $G$.
\end{enumerate}

To construct the remaining irreducible supercuspidal representations of $G$,
suppose that $K/F$ is unramified and then let $\bar{V}=L_{0}/L_{1}$. The
symplectic form on $V$ gives rise to a symplectic form on $\bar{V}$, which
we also denote by $\langle \;,\,\rangle $. The associated isometry group is
isomorphic to $U ( \mathcal{A}_{F} ) / U^{1} ( \mathcal{A}_{F} ) $ and in turn $%
SL_{2} ( k ) $. As such, to each $\lambda$ in $\Lambda_{1}$ which
is not real-valued we associate a $\left( q-1\right) $-dimensional
representation $\rho \left( \mathcal{A},\lambda \right) $ of $U\left( \mathcal{A}_{F}\right) $
which is cuspidal as a representation of \linebreak $U\left( \mathcal{A}_{F}\right) /U^{1}\left(
\mathcal{A}_{F}\right) $ and with character $\theta _{\lambda }$ satisfying 
\begin{align*}
\theta _{\lambda }\left( -1\right) & =\left( q-1\right) \lambda \left( -1\right)  \\
\theta _{\lambda }\left( a\right) & =-\lambda \left( a\right) -\lambda
^{-1}\left( a \right) \quad\text{ for }a\text{ in }K^{1}/K_{1}^{1}\text{ but not
in }F
\end{align*}%
(see, e.g., \cite{Sp}). Then $\pi \left( \mathcal{}A,\lambda \right) =\mathrm{Ind}\left(
G,U\left( \mathcal{A}_{F}\right) ;\rho \left( \mathcal{A},\lambda \right) \right) $ is
irreducible and supercuspidal. To the signum character we associate two
representations $\pi \left( \mathcal{A},\pm \right) =\mathrm{Ind}\left( G,U\left(
\mathcal{A}_{F}\right) ,\rho \left( \mathcal{A},\pm \right) \right) $ where $\rho \left( \mathcal{A},+\right) 
$ and $\rho \left( \mathcal{A},-\right) $ are the two $\left( q-1\right) /2$ 
-dimensional cuspidal representations of $U\left( \mathcal{A}_{F}\right) /U^{1}\left(
\mathcal{A}_{F}\right) $. We fix these representations as follows. Let $\left(
X,Y\right) $ be a complete polarization of $\bar{V}$. Let $B=B_{Y}$
denote the Borel subgroup of $U\left( \mathcal{A}_{F}\right) /U^{1}\left( \mathcal{A}_{F}\right) 
$ preserving $Y$, let $N$ be the unipotent radical of $B$ and let $N_0$ denote the set of nontrivial $n$ in $N$ such that $\langle nv,v\rangle$ is a square for any vector $v$ in $X$. $N_0$ lies in a conjugacy class of  $U(\mathcal{A}_{F})/U^{1}(\mathcal{A}_{F})$. This class does not depend on the polarization but does depend on $\langle \;,\,\rangle$, which we have taken fixed. We specify $\rho (\mathcal{A},\pm )$ by requiring that their characters $\theta _\pm$ satisfy 
\begin{align*}
\theta _{\pm}(n)=(-1\pm\sigma )/2
\end{align*}
where $n$ is an element of the conjugacy class and 
\begin{align*}
\sigma =(-1)^{(q-1)/2}\sqrt{(-1)^{(q-1)/2}q}.
\end{align*}
Any future reference to representations of the form $\pi (\mathcal{A},\lambda )$ will include $\pi (\mathcal{A},\pm )$ and similarly for $\rho (\mathcal{A},\lambda )$.

\begin{theorem}\label{Theo1.1}\emph{(}see, e.g., \cite[Theorem 1.1]{M6}\emph{)}  
The representations of the form $\pi (\mathcal{A},\lambda )$ and $\pi (\mathcal{A},\lambda ,\alpha )$ exhaust the supercuspidal spectrum of $G$ and enjoy the following equivalences. 
\begin{enumerate}
\item A representation of the form $\pi (\mathcal{A},\lambda ,\alpha )$ is never equivalent to a representation of the form $\pi (\mathcal{A}',\lambda ')$.
\item $\pi (\mathcal{A},\lambda )$ and $\pi (\mathcal{A}',\lambda ')$ are equivalent if and only if there exists a $g$ in $G$ such that: 
\begin{enumerate}
\item $K'=K^g$
\item $\lambda '=\lambda ^g$ or $(\lambda ^{-1})^g$ if $\lambda$ is a character and $\lambda '=\lambda$ if $\lambda =\pm$.
\end{enumerate}
\item $\pi(\mathcal{A},\lambda ,\alpha )$ and $\pi (\mathcal{A}',\lambda ',\alpha ')$ are equivalent if and only if there exists a $g$ in $G$ such that:
\begin{enumerate}
\item $K'=K^g$
\item $\alpha '=\alpha ^{g}$ is in $(P_{K'})^{[-n/2]}$ where $n=-\nu_{\mathcal{A}}(\alpha')$
\item $\lambda '=\lambda ^{g}$.
\end{enumerate}
\end{enumerate}
\end{theorem}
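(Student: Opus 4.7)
The plan is a two-part argument in the theory of types: first show that each listed representation is irreducible supercuspidal, and second establish exhaustion together with the equivalence criteria, in both cases by appealing to the classification of supercuspidals of $GL_2(F)$ and Clifford-Mackey theory.

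For the individual representations, I would use Mackey's irreducibility criterion. For $\pi(\mathcal{A},\lambda,\alpha)$ with $n$ odd, minimality of $\alpha$ forces the $G$-intertwining of $\rho'(\mathcal{A},\lambda,\alpha)$ to be $K^1 U^m(\mathcal{A}_F)$ itself, so the induced representation is irreducible and, being compactly induced from a cuspidal type, supercuspidal. When $n$ is even (which by the divisibility condition on $\nu_K(\alpha)$ forces $K/F$ unramified), one constructs $\rho(\mathcal{A},\lambda,\alpha)$ as a Heisenberg representation of the two-step filtration $K^1 U^m(\mathcal{A}_F)/K^1 U^{m'}(\mathcal{A}_F)$: properties (i) and (ii) of the listed construction follow from Stone-von Neumann applied to the central extension, after which Mackey again yields irreducibility of the induced representation. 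For the depth-zero $\pi(\mathcal{A},\lambda)$, cuspidality of $\rho(\mathcal{A},\lambda)$ on the reductive quotient $U(\mathcal{A}_F)/U^1(\mathcal{A}_F)\cong SL_2(k)$ together with the maximality of the parahoric $U(\mathcal{A}_F)$ gives irreducibility and supercuspidality by the standard Jacquet-module computation, with the characters $\theta_\lambda$ and $\theta_\pm$ recognized as the Deligne-Lusztig characters of the finite group.

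For exhaustion, I would invoke Kutzko's classification of supercuspidals for $GL_2(F)$: every irreducible supercuspidal $\tilde\pi$ of $GL_2(F)$ is compactly induced from a type on $K^{\times} U^m(\mathcal{A}_F)$ (positive depth) or on $F^{\times} U(\mathcal{A}_F)$ (depth zero). Restricting $\tilde\pi$ to $G$ and applying Clifford theory, $\tilde\pi|_G$ decomposes as a finite direct sum of irreducible supercuspidals, each obtained by inducing from $K^1 U^m(\mathcal{A}_F)$ or $U(\mathcal{A}_F)$ a constituent of the restricted type; a direct identification shows these are precisely the representations in the list, and since every supercuspidal of $G$ arises in such a restriction, exhaustion follows. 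The equivalence criteria then follow from Mackey's intertwining theorem: equivalence of two induced representations forces a $g\in G$ that conjugates the inducing data on the intersection of their supports, and a hereditary-order argument translates this into $\mathcal{A}'=\mathcal{A}^g$, $K'=K^g$, and, via minimality, $\alpha'\equiv\alpha^g$ modulo the appropriate power of $\mathcal{P}_{K'}$. The character identity $\lambda'=\lambda^g$ then falls out, with the $\lambda\mapsto\lambda^{-1}$ ambiguity in case (ii) accounted for by a Weyl-type element of $G$ normalizing $K^1$ in the depth-zero setting, and the signum case $\lambda=\pm$ is handled separately using the prescribed values of $\theta_\pm$ on $N_0$, whose well-definedness rests on the fixed choice of symplectic form. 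The main obstacle is the careful bookkeeping in the $n$ even case, where the Heisenberg construction must be matched with the Clifford-theoretic decomposition of a $GL_2$-type and the precise multiplicity statement in property (i) verified; once that compatibility is in place, the remaining arguments are routine Mackey analysis.
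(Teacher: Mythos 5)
The paper does not prove this statement: Theorem \ref{Theo1.1} is quoted from the literature (it is cited as Theorem 1.1 of \cite{M6}, with the underlying constructions going back to \cite{M1} and ultimately to Kutzko's classification for $GL_2$), so there is no in-paper argument to compare yours against. That said, your outline is essentially the standard proof from those sources: irreducibility of each $\pi(\mathcal{A},\lambda,\alpha)$ and $\pi(\mathcal{A},\lambda)$ via Mackey's criterion (with the Heisenberg/Stone--von Neumann step in the even-level case and cuspidality on the reductive quotient $U(\mathcal{A}_F)/U^1(\mathcal{A}_F)\cong SL_2(k)$ in the depth-zero case), exhaustion by restricting Kutzko's supercuspidals of $GL_2(F)$ to $G$ and applying Clifford theory, and the equivalence criteria from Mackey intertwining. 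The one place where your sketch compresses the real work is the "intertwining implies conjugacy" step: showing that a nonzero intertwining operator between two compactly induced types forces $K'=K^g$ and $\alpha'\equiv\alpha^g$ modulo $(P_{K'})^{[-n/2]}$ is exactly where the minimality hypothesis on $\alpha$ is used, and it requires the stratum-comparison arguments of \cite{BK} rather than formal Mackey theory alone. Also note that the equivalence $\pi(\mathcal{A},\lambda)\cong\pi(\mathcal{A},\lambda^{-1})$ in the depth-zero case is immediate from the character identity $\theta_\lambda=\theta_{\lambda^{-1}}$ on the finite group, so no Weyl-type element of $G$ is actually needed there. With those points filled in, your plan is a faithful reconstruction of the cited proof.
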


Now we turn to $\tilde{G}$. We realize $\tilde{G}$ as the set of ordered pairs $(g,\xi )$ where $g$ is an element of $G$ and $\xi =\pm 1$ with multiplication given by $(g,\xi )(g',\xi ')=(gg',\beta (g,g')\xi \xi ')$ where $\beta$ is a non-trivial two-cycle with values in $\{\pm 1\}$ (see, e.g., \cite{M1}). Given a subgroup $H$ of $G$, let $\tilde{H}$ denote the inverse image of $H$ in $\tilde{G}$ under the map $(g,\xi )\mapsto g$ from $\tilde{G}$ to $G$. In an abuse of notation we write $g$ for the element $(g,1)$ in $\tilde{G}$. Since $p\neq 2$, $\beta$ splits on any compact subgroup. Then, after choosing a splitting, we may define $\tilde{\rho }(\mathcal{A},\lambda ,\alpha )$ on $\tilde{U}$ where $U=K^{1}U^{m}(\mathcal{A}_{F})$ by $\tilde{\rho}(\mathcal{A},\lambda ,\alpha )(g,\pm 1)=\pm \rho (\mathcal{A},\lambda ,\alpha )(g)$ and set $\tilde{\pi}(\mathcal{A},\lambda ,\alpha )=\mathrm{Ind}(\tilde{G},\tilde{U}; \, \rho (\mathcal{A},\lambda ,\alpha ))$. Define $\tilde{\pi}(\mathcal{A},\lambda)$ similarly.

\begin{theorem}\label{Theo1.2}\emph{(}see, e.g., \cite[Theorem 1.2]{M6}\emph{)} 
The representations $\tilde{\pi}(\mathcal{A},\lambda )$ and $\tilde{\pi}(\mathcal{A},\lambda ,\alpha )$ exhaust the set of genuine irreducible supercuspidal representations of $\tilde{G}$. They enjoy equivalences exactly as in Theorem \ref{Theo1.1}.
\end{theorem}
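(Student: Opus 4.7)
The plan is to mirror the proof of Theorem \ref{Theo1.1}, transferring each of its constructions through a chosen splitting of the cocycle $\beta$. Since $p \neq 2$, $\beta$ splits on any compact subgroup of $G$; fixing such a splitting on the compact subgroup $U = K^1 U^m(\mathcal{A}_F)$, respectively on $U(\mathcal{A}_F)$, the formula $\tilde{\rho}(g, \pm 1) = \pm \rho(g)$ defines the unique genuine extension of $\rho$ to $\tilde{U}$. Thus $\tilde{\pi}(\mathcal{A}, \lambda, \alpha) = \mathrm{Ind}(\tilde{G}, \tilde{U}; \tilde{\rho}(\mathcal{A}, \lambda, \alpha))$ and $\tilde{\pi}(\mathcal{A}, \lambda)$ are manifestly genuine.

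For irreducibility and supercuspidality, I would run the Mackey intertwining computation on $\tilde{G}$ and reduce it to the one already performed for $G$. For any $\tilde{g} = (g, \xi) \in \tilde{G}$, the double coset $\tilde{U}\tilde{g}\tilde{U}$ projects onto $UgU$, and the genuineness condition forces any intertwining operator between $\tilde{\rho}$ and its $\tilde{g}$-conjugate to descend to an intertwining operator for $\rho$ on the corresponding double coset of $G$. Hence the intertwining set for $\tilde{\rho}$ is no larger than that for $\rho$, and Theorem \ref{Theo1.1} supplies irreducibility. Supercuspidality is immediate from compact induction off a compact-mod-centre subgroup.

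For exhaustion, let $\tilde{\pi}$ be an arbitrary genuine irreducible supercuspidal representation of $\tilde{G}$. Standard arguments produce an open compact-mod-centre subgroup $\tilde{J}$ of $\tilde{G}$ and an irreducible $\tilde{\tau}$ of $\tilde{J}$ with $\tilde{\pi} = \mathrm{Ind}(\tilde{G}, \tilde{J}; \tilde{\tau})$. Using the splitting of $\beta$ on the compact part of $J$, write $\tilde{\tau}(g, \pm 1) = \pm \tau(g)$ for a unique representation $\tau$ of $J$. The invariants classifying $\tau$ (its depth, the minimal element $\alpha$, the character $\lambda$ on $K^1$, or the depth-zero cuspidal data) are all intrinsic to the compact subgroup $J$, where the cover is trivial, so Theorem \ref{Theo1.1} identifies $\tau$ with one of the types $\rho(\mathcal{A}, \lambda)$ or $\rho(\mathcal{A}, \lambda, \alpha)$, and thence $\tilde{\pi}$ with the corresponding $\tilde{\pi}(\mathcal{A}, \lambda)$ or $\tilde{\pi}(\mathcal{A}, \lambda, \alpha)$.

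The hard part is verifying that the classification is insensitive to the choice of splitting and that the equivalence criteria transfer verbatim. Two splittings of $\beta$ on $J$ differ by a $\{\pm 1\}$-valued character of $J$; since $p$ is odd, such a correction is absorbed into the character $\lambda$ on $K^1$ without altering the equivalence class of $\tilde{\pi}$. The equivalences in Theorem \ref{Theo1.2} then follow from those of Theorem \ref{Theo1.1} because conjugation by $\tilde{g} = (g, \xi) \in \tilde{G}$ acts on $\tilde{\tau}$ only through its first coordinate, the $\xi$-factor cancelling, so the conjugacy conditions on $(K, \lambda, \alpha)$ reproduce exactly those already established for $G$.
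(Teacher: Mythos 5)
First, note that the paper offers no proof of this statement: Theorem \ref{Theo1.2} is quoted from \cite[Theorem 1.2]{M6} (the classification itself goes back to \cite{M1}), so there is no in-paper argument to compare yours against. Your outline follows the route one would expect --- split the cocycle on compact subgroups, transport the types, compare intertwining, classify by exhaustion --- but it has a genuine gap exactly where the content lies, namely the comparison of intertwining between $\tilde{G}$ and $G$. When you say that genuineness forces an intertwining operator between $\tilde{\rho}$ and $\tilde{\rho}^{\tilde{g}}$ to descend to one between $\rho$ and $\rho^{g}$, you are implicitly assuming that the splitting of $\beta$ on $\tilde{U}$ and the one on $\tilde{U}^{\tilde{g}}$ obtained by conjugation agree on $\tilde{U}\cap\tilde{U}^{\tilde{g}}$. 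They need not: they differ by a $\{\pm1\}$-valued character $\delta_{g}$ of $U\cap U^{g}$, and what descends is an element of $\mathrm{Hom}_{U\cap U^{g}}(\rho,\rho^{g}\otimes\delta_{g})$. The vanishing of $\mathrm{Hom}(\rho,\rho^{g})$ does not formally imply the vanishing of $\mathrm{Hom}(\rho,\rho^{g}\otimes\delta_{g})$, so irreducibility of $\tilde{\pi}$ is not a formal consequence of Theorem \ref{Theo1.1}. The repair is available --- $\delta_{g}$ is quadratic, hence trivial on the pro-$p$ groups $U^{m}(\mathcal{A}_{F})\cap U^{m}(\mathcal{A}_{F})^{g}$ since $p$ is odd, and the non-intertwining for $g$ outside $K^{1}U(\mathcal{A}_{F})$ is detected already at that pro-$p$ level via the characters $\chi_{\alpha}$ and minimality of $\alpha$ --- but this is precisely the content that must be supplied, and your proposal asserts it rather than proves it.

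The same issue undermines the exhaustion step as written. Granting that a genuine irreducible supercuspidal $\tilde{\pi}$ is compactly induced from some $(\tilde{J},\tilde{\tau})$ and that $\tilde{\tau}$ descends to a representation $\tau$ of $J$, you cannot invoke Theorem \ref{Theo1.1} to identify $\tau$ with one of the types $\rho(\mathcal{A},\lambda)$ or $\rho(\mathcal{A},\lambda,\alpha)$: that theorem classifies irreducible supercuspidal representations of $G$, not representations of compact open subgroups, and $\mathrm{Ind}(G,J;\tau)$ need not be irreducible, since its intertwining in $G$ can differ from that of $\tilde{\tau}$ in $\tilde{G}$ by exactly the twist $\delta_{g}$ above. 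What is needed is a classification of the underlying strata, i.e.\ that after refinement one lands on $K^{1}U^{m}(\mathcal{A}_{F})$ or $U(\mathcal{A}_{F})$ carrying one of the listed $\rho$'s; this is the substance of \cite{M1} and is not a corollary of the statement of Theorem \ref{Theo1.1}. Finally, your observation that changing the splitting twists $\lambda$ by a quadratic character of $K^{1}$ is correct, but since $K^{1}$ has even order modulo its pro-$p$ part such a twist genuinely changes the label $(\mathcal{A},\lambda,\alpha)$; the parametrization and the equivalence assertion are only well posed once a splitting is fixed, which should be stated explicitly rather than dismissed as an absorbable correction.
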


In what follows we often write, in an abuse of notation, $\pi (\mathcal{A},\lambda ,\alpha)$ and $\pi (\mathcal{A},\lambda )$ for $\tilde{\pi }(\mathcal{A},\lambda ,\alpha )$ and $\tilde{\pi} (\mathcal{A},\lambda )$, respectively. It will be clear from context whether the representation being considered is a representation of $G$ or $\tilde{G}$. We will also refer to $\mathcal{A}$ as the order parameter, $\lambda$ as the quadratic extension parameter, and $\alpha$ as the minimal element parameter.

\section{Supercuspidal representations of the split ternary orthogonal group}

In this section we parameterize the supercuspidal portion of the admissible dual of $O(F)$ where $O$ is the orthogonal group attached to a split ternary quadratic form over $F$. The material of this section is known (see, e.g., \cite{M6}) so once again we will be brief in our discussion. Recall that $O(F)\cong SO(F)\times \langle -I\rangle$ where $I$ is the identity matrix acting on the space. Further $SO(F)$ is isomorphic to $PGL_{2}(F)$.

Let $V_1$ be the subspace of $A_{F}(V)$ consisting of trace zero elements. Equip $V_1$ with the quadratic form $Q_{1}(A)=-\mathrm{det}A$. Then we map $GL_{2}(F)$ to $SO(V_{1})$ by sending $g$ in $GL_{2}(F)$ to the map given by conjugation by $g$. Then $PSL_{2}(F)$ can be identified with the commutator subgroup of $O(V_{1})$ and $PGL_{2}(F)$ with $SO(V_{1})$. Then $O(V_{1})/PSL_{2}(F)$ is abelian of type (2,2,2) and one can take generators to be $-I$ and $\beta_{i},i=1,2,$ where the spinor norm of $\beta _{1}$ is the class in $F^{\times}/(F^{\times})^2$ associated to the non-square unit and the spinor norm, given by the determinant map on $PGL_{2}(F)$, of $\beta _2$ is either of the classes associated to an element of order one.

The parameterization of the supercuspidal dual of $PSL_{2}(F)$ follows readily from that of $G$ by restricting to those $\lambda$ with $\lambda (-1)=1$. Note that $\mathrm{sgn}(-1)=1$ if and only if $K/F$ is unramified and $q\equiv 3$ ~ $\mathrm{mod}4$ (see, e.g., \cite[Lemma 2.2]{M6}).

For $\pi (\mathcal{A},\lambda ,\alpha )$ a representation of $PSL_{2}(F)$ let $\pi '(\mathcal{A},\lambda ,\alpha )$ = \linebreak $\mathrm{Ind}$($O(V_{1}), PSL_{2}(F);\pi (\mathcal{A},\lambda ,\alpha )$). Then $\pi '(\mathcal{A},\lambda ,\alpha )$ decomposes as the sum of four representations $\pi (\mathcal{A},\lambda ,\alpha ,\gamma_{1},\gamma_{2})$ where $\gamma_{i}, i=1,2$ are defined below. Define $\pi '(\mathcal{A},\lambda )$ similarly for $\pi (\mathcal{A},\lambda )$ a representation of $PSL_{2}(F)$. Then likewise $\pi '(\mathcal{A},\lambda )$ decomposes into four representations $\pi (\mathcal{A},\lambda ,\gamma_{1},\gamma_{2})$ with $\gamma_{i}$ as above. These representations are distinct if $\lambda\neq\pm$. If $\lambda =\pm$ then the four distinct representations obtained for $+$ are the same as those for $-$ and thus we denote these representations $\pi (\mathcal{A},\lambda ,\gamma_{1},\gamma_{2})$ with $\lambda =\mathrm{sgn}$.

$\gamma_2$ refers to the action of $-I$ and is $+1$ if the action is trivial and is $-1$ otherwise. The definition of $\gamma_1$ is more complicated and given below; see \cite{M6} for details.

First assume $q\equiv1\;\mathrm{mod}4$ or $K/F$ is ramified. Further consider representations of the form $\pi=\pi(A,\lambda,\alpha,\gamma_{1},\gamma_{2})$. We may take $\beta_{i},i=1$ or $2$, as $K/F$ is unramified or not, equal to $\alpha$ viewed as an element of $GL_{2}(F)$. Then there is, up to scalar, a unique vector $v$ in the space of $\pi$ transforming according to $\lambda$ on $PK^1$. Further $\pi(\beta_{i})v=cv$ for some $c=\pm 1$. Set $\gamma_{1}=c$.

Before turning to the case $q\equiv3\;\mathrm{mod}4$ and $K/F$ unramified for representations of the form $\pi(\mathcal{A},\lambda,\alpha,\gamma_{1},\gamma_{2})$, we consider $\gamma_1$ generally for representations of the form $\pi(\mathcal{A},\lambda,\gamma_{1},\gamma_{2})$. Let $\rho(\lambda')$ be the associated representation of $PGL_{2}(k_F)$ with $\lambda'$ a character of $K^\times$ such that $\lambda'|_{K^1}=\lambda$ and $\lambda'|_{F^\times}=1$.

If $q\equiv1\;\mathrm{mod}4$, let $\delta$ be such that $K=F[\delta]$ with $\mathrm{tr}\delta=0$ and $\delta$ representing the nontrivial coset in $K^{\times}/K^{1}F^{\times}$. Then set $\gamma_{1}=\lambda'(\delta)$.

If $q\equiv3\;\mathrm{mod}4$. Then, as in \cite{M6}, let $t=t(K)\geq2$ be such that $2^t$ is the highest power of $2$ dividing $q+1$. Then set $\gamma_{1}=\lambda'(\beta)$ where $\beta$ is a primitive $2^{t+1}$-root of unity in $K$ of norm $-1$. As noted in \cite{M6}, $\gamma_1$ is one of the two square roots of $\lambda(\beta^2)$. There is an ambiguity here coming from the choice of $\beta$. In all cases however where we will need to compare values of $\gamma_1$ attached to a $\beta$ we will be able to assume that the fields attached to the representations are equal and then we will be able to use the same $\beta$.

The final remaining case for $\gamma_1$ is for $\pi(\mathcal{A},\lambda,\alpha,\gamma_{1},\gamma_{2})$ with $q\equiv3\;\mathrm{mod}4$ and $K/F$ unramified. Let $\beta$ be as above a primitive $2^{t+1}$ root of unity of norm $-1$. Then with $v$ chosen as above $\pi(\beta)v=cv$ for a constant $c$ such that $c^{2}=\lambda(\beta^{2})$. Set $\gamma_{1}=c$. Once again there is ambiguity but we will be able to avoid it.

\begin{theorem}\label{theo2.1}\emph{(}see, e.g., \cite[Theorem 2.3]{M6}\emph{)} 
The representations \linebreak $\pi(\mathcal{A},\lambda,\alpha,\gamma_{1},\gamma_{2})$ and $\pi(\mathcal{A},\lambda,\gamma_{1},\gamma_{2})$ constructed above exhaust the supercuspidal spectrum of $O(V_{1})$. They enjoy the following equivalences. 
\begin{enumerate}
\item A representation of the form $\pi(\mathcal{A},\lambda,\alpha,\gamma_{1},\gamma_{2})$ is never equivalent to one of the form $\pi(\mathcal{A}',\lambda',\gamma_{1}',\gamma_{2}')$
\item Representations $\pi(\mathcal{A},\lambda,\gamma_{1},\gamma_{2})$ and $\pi(\mathcal{A}',\lambda',\gamma_{1}',\gamma_{2}')$ are equivalent if and only if $\gamma_{i}=\gamma_{i}'$ for $i=1,2$ and there exists a $g$ in $O(V)$ such that $K'=K^g$ and $\lambda'=\lambda^g$ or $(\lambda^{-1})^g$.
\item Representations $\pi(\mathcal{A},\lambda,\alpha,\gamma_{1},\gamma_{2})$ and $\pi(\mathcal{A}',\lambda',\alpha',\gamma_{1}',\gamma_{2}')$ are \linebreak equivalent if and only if $\gamma_{i}=\gamma_{i}'$ for $i=1,2$ and there exists a $g$ in $O(V_{1})$ such that $K'=K^g$ and 
\begin{enumerate}
\item $\alpha'-\alpha^g$ is in $(P_{K'})^{-[n/2]}$ where $n=\nu_{\mathcal{A}'}(\alpha')$;
\item $\lambda'=\lambda^g$
\end{enumerate}
\end{enumerate}
\end{theorem}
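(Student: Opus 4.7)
The plan is to deduce Theorem \ref{theo2.1} from Theorem \ref{Theo1.1} by Clifford–Mackey analysis along the normal subgroup chain $PSL_2(F)\trianglelefteq O(V_1)$. Since $O(V_1)/PSL_2(F)$ is a finite abelian $2$-group of type $(2,2,2)$, any irreducible admissible representation of $O(V_1)$ restricts semisimply to $PSL_2(F)$ and appears as a constituent of $\mathrm{Ind}(O(V_1),PSL_2(F);\pi)$ for some irreducible supercuspidal $\pi$ of $PSL_2(F)$. The supercuspidal spectrum of $PSL_2(F)$ is obtained from Theorem \ref{Theo1.1} by restricting to those $\lambda$ with $\lambda(-1)=1$ (using \cite[Lemma 2.2]{M6} to pin down when the signum case contributes). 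Hence it suffices to decompose each induced representation $\pi'(\mathcal{A},\lambda,\alpha)$ and $\pi'(\mathcal{A},\lambda)$ and to check that the resulting irreducible components are parameterized precisely by $(\gamma_1,\gamma_2)$.

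Next I would compute, for a fixed underlying $\pi$ on $PSL_2(F)$, the stabilizer of its isomorphism class inside the quotient group $O(V_1)/PSL_2(F)$. The three distinguished coset representatives $-I$, $\beta_1$, $\beta_2$ generate this quotient. By inspecting how each of these elements conjugates the type $(K^1 U^m(\mathcal{A}_F),\rho)$ used to induce $\pi$—in particular, how conjugation acts on $K$, on $\lambda$, and on the minimal element $\alpha$—one sees via Theorem \ref{Theo1.1} (iii) that two of these three generators fix the class of $\pi$ while the third, together with the corresponding conjugate of $\lambda$, produces either the same class or its unique Galois twist. This yields a stabilizer of index $2$ in $O(V_1)/PSL_2(F)$ in the generic case, and consequently four irreducible constituents, each occurring with multiplicity one, once one extends to the stabilizer and induces. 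The two binary choices of extension are exactly the eigenvalues of the two elements acting trivially on the class, which one identifies as $\gamma_2$ (the scalar of $-I$) and $\gamma_1$ (the scalar of $\beta_i$, where $i$ depends on whether $K/F$ is unramified or ramified, matching the definitions given preceding the theorem).

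The identification of $\gamma_1$ in the $\pi(\mathcal{A},\lambda)$ cases, where no minimal element is present, requires an additional short step: one uses the realization of $\rho(\mathcal{A},\lambda)$ as a Deligne–Lusztig (or Green) cuspidal representation of $SL_2(k)\cong U(\mathcal{A}_F)/U^1(\mathcal{A}_F)$, together with the description of $\rho(\lambda')$ in $PGL_2(k_F)$, to evaluate the extension scalar at a lift of $\delta$ (resp. $\beta$) to $O(V_1)$. For $q\equiv 1\pmod 4$ this evaluates directly to $\lambda'(\delta)$; for $q\equiv 3\pmod 4$ one gets a square root of $\lambda(\beta^2)$, whose intrinsic sign ambiguity is harmless under the equivalence criterion (ii), as noted in the paper. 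The residual case $q\equiv 3\pmod 4$ with $K/F$ unramified and $\alpha$ present is handled identically on the vector $v$ transforming under $\lambda$ on $\widetilde{PK^1}$.

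Once the four irreducible components of each induced representation are separated by $(\gamma_1,\gamma_2)$, the equivalence assertions (i), (ii), and (iii) follow from Clifford theory in the standard way: by Frobenius reciprocity two constituents of $\pi'$ and $(\pi^*)'$ can be equivalent only if $\pi$ and $\pi^*$ are $O(V_1)$-conjugate, which by Theorem \ref{Theo1.1} amounts to the conditions on $K$, $\lambda$, and $\alpha$ listed in (ii) and (iii); and among the four components sharing the same underlying $PSL_2(F)$-class, equivalence is exactly the coincidence of $(\gamma_1,\gamma_2)$. Exhaustion follows because the construction above produced a component of the induction from every supercuspidal $\pi$ of $PSL_2(F)$, and every supercuspidal of $O(V_1)$ appears in such an induction by the Clifford step. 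The principal obstacle I anticipate is the bookkeeping of step two: verifying that the stabilizer in $O(V_1)/PSL_2(F)$ really has index $2$ uniformly across the cases $q\bmod 4$ and $K/F$ (un)ramified, and that in each case the two elements acting by scalars are precisely those whose scalars are recorded as $\gamma_1$ and $\gamma_2$; the rest is formal Mackey theory.
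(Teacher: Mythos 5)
The paper does not actually prove Theorem \ref{theo2.1}; it is quoted from \cite[Theorem 2.3]{M6}, so there is no internal argument to compare against. Your Clifford--Mackey outline along $PSL_2(F)\trianglelefteq O(V_1)$ is certainly the intended route (it is how the cited source, and the surrounding discussion in Section 2, are organized), and the generic-case skeleton is sound: $-I$ acts trivially, conjugation by $\beta_1,\beta_2$ realizes $F^\times/(F^\times)^2$ acting on the supercuspidals of $PSL_2(F)$, the image of $N_{K/F}(K^\times)$ in $F^\times/(F^\times)^2$ stabilizes $\pi$, and for $\lambda\neq\pm$ this gives a stabilizer of index two, four extensions, and four irreducible inductions separated by the scalars of $-I$ and of the stabilizing $\beta_i$, which is exactly the $(\gamma_1,\gamma_2)$ bookkeeping. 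The equivalence statements then do follow by Mackey theory from Theorem \ref{Theo1.1}, as you say.

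There are, however, two places where what you have written is a plan rather than a proof, and one of them conceals a claim that is false as stated. First, the assertion that the stabilizer has index $2$ ``uniformly across the cases'' fails for $\lambda=\pm$: conjugation by an element of nonsquare unit determinant interchanges the two $(q-1)/2$-dimensional cuspidal representations $\rho(\mathcal{A},+)$ and $\rho(\mathcal{A},-)$ of $U(\mathcal{A}_F)/U^1(\mathcal{A}_F)\cong SL_2(k)$, while an element of odd-valuation determinant moves $\mathcal{A}$ to the non-conjugate maximal order; so the orbit of $\pi(\mathcal{A},+)$ under $O(V_1)/PSL_2(F)$ has size four and the stabilizer is only $\langle -I\rangle\cdot PSL_2(F)$. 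This is precisely the degenerate case the paper's own discussion singles out (``the four distinct representations obtained for $+$ are the same as those for $-$''), and your argument as written would produce the wrong count of constituents there; the sgn case must be treated separately, not by the generic index-$2$ mechanism. Second, the identification of the abstract extension character with the concretely defined $\gamma_1$ --- $\lambda'(\delta)$ when $q\equiv 1\pmod 4$, $\lambda'(\beta)$ for a primitive $2^{t+1}$-th root of unity when $q\equiv 3\pmod 4$, and the eigenvalue on the distinguished vector $v$ in the minimal-element cases --- is where essentially all of the content of the theorem lives, and you only gesture at it (``evaluate the extension scalar at a lift of $\delta$''). Since the parametrization by $(\gamma_1,\gamma_2)$ is what the rest of the paper uses (e.g., in Theorem \ref{theo3.3}), deferring this computation means the statement has not actually been established; you would need to carry out the character evaluations on $PGL_2(k_F)$ and on $K^1 U^m(\mathcal{A}_F)$ in each of the four case splits.
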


\section{A theta correspondence}

In this section we recall necessary facts concerning the theta- \linebreak correspondences attached to the reductive dual pair $(G,O(V_{1}))$. For details see \cite{M6}.

Recall the general settings of theta-correspondences for symplectic and orthogonal groups, temporarily suspending our previous notation. For $i=1,2$, let $W_i$ be a finite-dimensional vector space over $F$ equipped with a non-degenerate bilinear form $\langle\;,\,\rangle_i$, with $\langle\;,\,\rangle_1$ skew-symmetric and $\langle\;,\,\rangle_2$ symmetric. Equip $W=\mathrm{Hom}_{F}(W_{1},W_{2})$ with the non-degenerate skew-symmetric bilinear form $\langle\;,\,\rangle$ defined by $\langle w,w'\rangle=\linebreak \mathrm{tr}\,w\sigma(w')$ where $\sigma$ is the element of
\begin{align*}
\mathrm{Hom}_{F}(\mathrm{Hom}_{F}(W_{1},W_{2}),\mathrm{Hom}_{F}(W_{2},W_{1}))
\end{align*}
defined by $\langle Tw_{1},w_{2}\rangle_{2}=\langle w_{1},\sigma(T)w_{2}\rangle_{1}$ for all $w_{1}$, in $W_{1}$, and $w_2$ in $W_2$. Let $G_1$, $G_2$ and $G$ be the isometry groups of $\langle\;,\,\rangle_1$, $\langle\;,\,\rangle_2$, and $\langle\;,\,\rangle$ respectively. Identify $G_1$ and $G_2$ with subgroups of $G$ via their usual actions on $W$. Then $(G_{1},G_{2})$ is a reductive dual pair in $G$.

For $\psi$ a non-trivial additive character of $F$, let $\omega_{\psi}^\infty$ denote the (smooth) Weil representation of $\tilde{G}$, the non-trivial two-fold cover of $G$, associated to $\psi$. For $H$ a closed subgroup of $G$, let $\tilde{H}$ denote the inverse image of $H$ in $\tilde{G}$ and let $\mathcal{R}_{\psi}(\tilde{H})$ be the set of irreducible admissible representations of $H$ which occur as quotients of $\omega_{\psi}^{\infty}|_{\tilde{H}}$. Then $\mathcal{R}_{\psi}(\widetilde{G_{1},G_{2}})$ gives rise to theta-correspondences $\theta=\theta_{\psi}$:$\mathcal{R}_{\psi}(\tilde{G_{1}})\rightarrow\mathcal{R_{\psi}}(\tilde{G_{2}})$ and $\theta=\theta_{\psi}$:$\mathcal{R_{\psi}}(\tilde{G_{2}})\rightarrow\mathcal{R_{\psi}}(\tilde{G_{1}})$ with the direction of $\theta$ clear from context. Note that $\tilde{G_{2}}$ is always a trivial cover so we write, in an abuse of notation, $\mathcal{R_{\psi}}(G_{2})$ for $\mathcal{R_{\psi}}(\tilde{G_{2}})$ and consider the representations as representations of $G_2$.

In our proofs in section four of this paper we will use various lattice models of $\omega_\psi$. We will be brief in our discussion of the background for lattice models. For further details see, e.g., \cite{M2}. Recall that for a lattice $L$ in $W$, the dual of $L$ (with respect to $\psi$) is defined to be $L^{*}=\{w\in W|\psi(\langle w,\ell\rangle)=1$ for all $\ell$ in $L\}$. If $L$ is self dual then $\omega_{\psi}^{\infty}$ may be realized on the space $Y_{L}=Y$ of compactly supported functions on $W$ which satisfy 
\begin{align*}
f(w+\ell)=\chi(\langle w,\ell\rangle)f(w)
\end{align*}
for $w$ in $W$ and $\ell$ in $L$. For each $w$ in $W$ let $y_w$ denote the unique vector in $Y$ which is supported on $L-w$ with $y_{w}(-w)=1$.

If $L$ is not self-dual but satisfies
\begin{align*}
P_{F}L^{*}\!\subseteq L\!\subseteq L^*
\end{align*}
we can attach a non-self dual lattice model to $L$ as follows. View $\bar{L}=L^{*}/L$ as a symplectic vector space over $k_F$ with the inherited form. Let $(\rho_{\psi'},X)$ be a realization of the Heisenberg representation of $H(\bar{L})$, the Heisenberg attached to $\bar{L}$, with central character $\psi'$, 
where $\psi '$ is the character of $k_F$ coming from considering $\psi$ restricted to $P_{F}^{n}/P_{F}^{n+1}$ with $P_{F}^{n}$ the conductor of $\psi$. 
Let $e:W\rightarrow H(W)$ be the embedding as sets given by $w\mapsto (w,0)$ and $\gamma :H(W)\rightarrow W$ denote the map defined by $(w,t)\mapsto w$. Then let $J^*$ be the subgroup of $H(W)$ generated by $e(L^{*})$. Then we may inflate $\rho_{\psi'}$ to a representation of $J^*$. We may also then define a representation $\rho_L$ of $\gamma^{-1}(L^{*})$ on $X$ by $\rho_{L}(ah)(v)=\psi(a)\rho_{\psi'}(h)v$ where $a$ is in $Z(H(W))$, $h$ is in $J^*$ and $v$ is in $X$. Then $\mathrm{Ind}(H(W),\gamma^{-1}(L^{*});\rho_{L})$ realizes $\rho_\psi$, the Heisenberg representation of $H(W)$ associated to $\psi$.

Then the space $Y$ of $\omega_{\psi}^\infty$ is the space of compactly supported functions $f:W\rightarrow X$ such that 
\begin{align*}
f(w+a)=\psi((\langle w,a\rangle /2 )\rho_{L}(e(a))f(w)\,\textup{for}\,a\,\textup{in}\,L.
\end{align*}
Now let $y_{w,x}$ denote the function in $Y$ supported in $(-w+L^{*})$ such that $y_{w,x}(-w)=x$. Then the $y_{w,x}\;\mathrm{span}\;Y$.

Before considering specific dual pairs we recall two results concerning the action of compact subgroups in lattice models. In particular in a non self-dual model as above, suppose $M$ is a sublattice of $L$. Then $H_M =\{g\in G:(g-1)M^* \subset L^* \}$ is a subgroup of $G$. Further we have 

\begin{proposition}\label{prop3a}
If a function $f$ in $Y$ is supported on $M^*$, then 
\begin{align*}
w_\chi (h)f(w)=\rho_L (2c(h)w)\psi (\langle w,c(h)w\rangle )f(w)
\end{align*}
for $h$ in $H_M$ where $c(h)=(1-h)(1+h)^-1$ is the Cayley function of $h$.
\end{proposition}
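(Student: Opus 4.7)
The plan is to verify the identity on the spanning family $\{y_{w,x}:w\in M^*,\,x\in X\}$ for $Y$, exploiting the characterizing property of $\omega_\chi$ as the intertwiner of the Heisenberg representation on $Y$. I would first settle well-definedness: for $h\in H_M$ the inclusion $(h-1)M^*\subseteq L^*$ together with $p\ne 2$ yields a convergent geometric series for $(1+h)^{-1}$, so $c(h)=(1-h)(1+h)^{-1}$ is defined and satisfies $c(h)M^*\subseteq L^*$. Thus $2c(h)w\in L^*$ for $w\in M^*$, and $\rho_L(e(2c(h)w))$ is a well-defined operator on $X$. Since $h\in\mathrm{Sp}(W)$ preserves $\langle\,,\rangle$, its Cayley transform $c(h)$ is self-adjoint with respect to the symplectic form, so the scalar $\langle w,c(h)w\rangle$ and hence $\psi(\langle w,c(h)w\rangle)$ are unambiguous.

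For the main verification, I would combine two inputs. The first is the defining intertwining relation $\omega_\chi(h)\,R(v)\,\omega_\chi(h)^{-1}=R(hv)$, where $R$ denotes the Heisenberg action on $Y$; this pins down $\omega_\chi(h)$ up to a scalar on each irreducible component. The second is the covariance $f(w+\ell)=\psi(\langle w,\ell\rangle/2)\rho_L(e(\ell))f(w)$ characterizing elements of $Y$. On an $f$ supported in $M^*$, combining intertwining with $v\in L^*$ and $L$-covariance forces $\omega_\chi(h)f$ to be a pointwise operator of the form $f(w)\mapsto T(w)f(w)$, where $T(w)$ is an operator on $X$ constrained by the compatibility $T(w+\ell)=\psi(\langle\ell,c(h)w\rangle+\tfrac12\langle\ell,c(h)\ell\rangle)\,\rho_L(e(\ell))T(w)\rho_L(e(\ell))^{-1}$ for $\ell\in L$. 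Setting $T(w)=\rho_L(e(2c(h)w))\psi(\langle w,c(h)w\rangle)$ and using the self-adjointness of $c(h)$ together with the Heisenberg commutation rule $\rho_L(e(a))\rho_L(e(b))=\psi(\langle a,b\rangle)\rho_L(e(b))\rho_L(e(a))$ verifies this compatibility, establishing the formula up to an overall scalar depending only on $h$; a normalization check on a single concretely chosen vector then pins the scalar to $1$.

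The main obstacle is the precise bookkeeping of the Weil cocycle on the compact subgroup $H_M$: one must verify that the overall scalar above is indeed $1$. This reduces to the splitting of the Weil cocycle on compact subgroups for $p\ne 2$ (already invoked in Section~1), together with the observation that for $h\in H_M$ the Gaussian integrals appearing in the standard Cayley-kernel derivation collapse to delta functions on the lattice rather than producing nontrivial Gauss sums. A secondary subtlety is the quadratic correction term $\langle\ell,c(h)\ell\rangle$ arising in the compatibility identity above: because $c(h)\ell\in L^*$ for $\ell\in L$, this pairing lies in an ideal on which $\psi$ is trivial, so the correction drops out and the compatibility holds as stated.
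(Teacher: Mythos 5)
The paper does not prove this proposition: it is recalled from \cite{M2} (``we recall two results concerning the action of compact subgroups in lattice models''), so there is no internal argument to compare against. Your architecture --- check that the right-hand side defines an operator intertwining the Heisenberg action and then invoke Stone--von Neumann --- is the standard one and is sound in outline, and your well-definedness points (convergence of $(1+h)^{-1}$ for $p\neq 2$, $c(h)M^*\subseteq L^*$, symmetry of $c(h)$ with respect to $\langle\,,\rangle$, triviality of $\psi$ on $\langle \ell, c(h)\ell\rangle$) are correct.

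However, the sketch glosses over the two places where the actual content sits. First, ``up to an overall scalar depending only on $h$'' is not what your argument delivers. The space of $f\in Y$ supported on $M^*$ decomposes under $\gamma^{-1}(L^*)$ into pairwise inequivalent irreducible subspaces indexed by the cosets $w+L^*$ in $M^*/L^*$, so intertwining against $R(v)$ for $v\in L^*$ together with the $L$-covariance pins down $\omega_\chi(h)$ only up to one scalar \emph{per coset}. Correlating those scalars requires the intertwining relation for $v$ ranging over all of $M^*$ (these operators permute the cosets transitively), or an extension of the candidate operator to all of $Y$ where Stone--von Neumann applies; as written, the $w$-dependence of the formula through $\rho_L(2c(h)w)$ and $\psi(\langle w,c(h)w\rangle)$ is precisely the assertion to be proved, not a consequence of the compatibility identity you verify. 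Second, the normalization to scalar $1$ cannot be obtained by citing the splitting of the metaplectic cocycle on compact subgroups: the formula of the proposition \emph{is} a realization of that splitting, and a splitting is unique only up to a character of $H_M$. What must actually be checked is either multiplicativity $\Omega_{h_1}\Omega_{h_2}=\Omega_{h_1h_2}$ --- which reduces to the composition law expressing $c(h_1h_2)$ in terms of $c(h_1)$ and $c(h_2)$ plus the Heisenberg commutation relations --- or agreement with an independently normalized formula for $\omega_\chi$ on the stabilizer of the lattice; ``a normalization check on a single concretely chosen vector'' presupposes such an independent normalization, which you have not supplied.
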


On the other hand, assume that $L$ is self dual with $M$ in $L$ and set 
\begin{align*}
J_M =\{g\in G:(g-1)M^*\subseteq M\}
\end{align*}
and
\begin{align*}
H_M =\{g\in G|(g-1)M^*\subset L\}
\end{align*}
then $H_M$ and $J_M$ are subgroups of $G$ with $J_M$ normal in $H_M$ with abelian quotient and we have 

\begin{proposition}\label{prop3b} 
If $w$ is in $M^*$ then 
\begin{align*}
w_\chi (h)y_w =\psi (\langle hw,w\rangle /2)y_w
\end{align*}
for $w$ in $H_M$.
\end{proposition}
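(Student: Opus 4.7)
The plan is to view Proposition~\ref{prop3b} as the self-dual specialization of Proposition~\ref{prop3a}, supplemented by a direct identification of the resulting scalar. When $L$ is self-dual the containment $P_{F}L^{*}\subseteq L\subseteq L^{*}$ is automatic and the definition of $H_{M}$ matches the one used in Proposition~\ref{prop3a}, so the apparatus of the non self-dual lattice model may be applied; the only new feature is that $\bar{L}=L^{*}/L=0$ forces the Heisenberg representation $\rho_{L}$ to reduce to its central character, so the formula of Proposition~\ref{prop3a} collapses to pure multiplication by a $\psi$-phase.

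First I would observe that $H_{M}$ in fact stabilizes $L$: from $(h-1)M^{*}\subseteq L$ together with $L=L^{*}\subseteq M^{*}$ we obtain $(h-1)L\subseteq L$, hence $hL=L$, so $h$ lies in the compact stabilizer of $L$ in $G$. Since $p$ is odd the metaplectic cocycle splits over this compact stabilizer, and $\omega_{\psi}$ acts on the basis $\{y_{v}\}_{v\in W/L}$ of $Y$ by permutation up to an explicit phase with respect to the natural splitting built into the lattice model.

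Second, for $w\in M^{*}$ the defining condition on $h\in H_{M}$ gives $hw-w\in L$, so $y_{w}$ and $y_{hw}$ are supported on the same coset $-w+L$. Evaluating $y_{hw}$ at $-w=-hw+(hw-w)$ by means of the covariance $f(v+\ell)=\psi(\langle v,\ell\rangle)f(v)$ then yields an explicit proportionality between $y_{hw}$ and $y_{w}$ involving a power of $\psi$ in $\langle hw,w\rangle$. Combined with the permutation-with-phase assertion $\omega_{\psi}(h)y_{w}=c(h,w)\,y_{hw}$ from the previous paragraph, this shows that $\omega_{\psi}(h)y_{w}$ is a scalar multiple of $y_{w}$.

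The remaining task is to pin down the scalar. I would first verify that $\omega_{\psi}(h)y_{0}=y_{0}$, reflecting that the splitting is normalized so that the vacuum is fixed by $H_{M}$, and then transport this to arbitrary $w\in M^{*}$ using the Heisenberg intertwining identity $\omega_{\psi}(h)\rho_{\psi}(a,0)\omega_{\psi}(h)^{-1}=\rho_{\psi}(ha,0)$ together with the identification of $y_{-w}$ with $\rho_{\psi}(w,0)y_{0}$ up to a known phase. The main obstacle is the bookkeeping of the factors of two that distinguish the lattice-covariance $\psi(\langle v,\ell\rangle)$, the Heisenberg cocycle $\psi(\langle w,w'\rangle/2)$, and the metaplectic splitting; once these conventions are aligned with the construction of the self-dual lattice model, the stated formula $\omega_{\psi}(h)y_{w}=\psi(\langle hw,w\rangle /2)\,y_{w}$ emerges.
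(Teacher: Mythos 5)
The paper does not actually prove Proposition~\ref{prop3b}: both it and Proposition~\ref{prop3a} are recalled from \cite{M2} without argument, so there is no in-paper proof to measure you against. Judged on its own terms, the core of your argument (paragraphs two through four) is the standard and correct one: $H_M$ stabilizes $L$ because $L=L^*\subseteq M^*$ forces $(h-1)L\subseteq L$ (and $hL=L$ since $h$ has determinant one); with the natural splitting over the stabilizer of $L$ one has $\omega_\psi(h)y_w=y_{hw}$; and since $(h-1)w\in L$ for $w\in M^*$, the functions $y_{hw}$ and $y_w$ live on the same coset $-w+L$, so evaluating the covariance relation at $-w$ identifies the scalar. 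Your instinct that the factors of two are where all the danger lies is right: with the covariance exactly as printed in the paper, $f(w+\ell)=\chi(\langle w,\ell\rangle)f(w)$, this computation yields the scalar $\psi(\langle hw,w\rangle)$ rather than $\psi(\langle hw,w\rangle/2)$; the stated formula comes out only with the usual normalization $f(w+\ell)=\psi(\langle w,\ell\rangle/2)f(w)$ of the self-dual lattice model, so you should say explicitly that this is the convention being used.

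The one genuinely wrong step is the opening claim that Proposition~\ref{prop3b} is the self-dual specialization of Proposition~\ref{prop3a}. It is not: even when $\bar L=0$ kills the Heisenberg factor, the scalar $\psi(\langle w,c(h)w\rangle)$ from Proposition~\ref{prop3a} does not equal $\psi(\langle hw,w\rangle/2)$. Writing $w=(1+h)u$ one finds $\langle w,c(h)w\rangle=2\langle hu,u\rangle$ while $\tfrac12\langle hw,w\rangle=\langle hu,u\rangle+\tfrac12\langle h^2u,u\rangle$, and these differ by a term of order $(h-1)^3$; they agree only modulo the conductor of $\psi$ under the hypotheses on $h$ and $w$, which would itself require proof. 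Moreover Proposition~\ref{prop3a} presupposes that the Cayley transform $c(h)=(1-h)(1+h)^{-1}$ is defined, which is not part of the hypotheses of Proposition~\ref{prop3b}. Since your direct argument never actually uses this reduction, the fix is simply to delete the first paragraph and let the translation-plus-covariance computation stand on its own, together with the (standard, but not free) fact that the metaplectic splitting over the stabilizer of a self-dual lattice acts by $f\mapsto f(h^{-1}\cdot)$ with no further phase.
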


Generalities out of the way, we now proceed to a specific case.

Set $\psi = \chi, W_{1}=V$ and $W_{2}=V_{1}$ as in the previous sections. Let $\sigma =\sigma_{1}$.

\addtocounter{theorem}{2}
\begin{theorem}\label{theo3.3}\emph{(}see, e.g., \cite[Theorem 3.3]{M6}\emph{)}
The following hold:
\begin{enumerate}
\item Let $K=F[\alpha]/F$ be unramified with $\alpha$ $K/F$-minimal. If $\mathcal{O}_{K}$ is self-dual and $\nu_{K}(\alpha)$ is odd, then $\pi(\mathcal{A},\lambda ,\alpha)$ is in $\mathcal{R}_{\chi}(\tilde{G})$ for all $\lambda$. If $\mathcal{O}_{K}^{*}=P_{K}^{-1}$ and $\nu_{K}(\alpha)$ is even, then $\pi(A,\lambda ,\alpha)$ is in $\mathcal{R}_{\chi}(\tilde{G})$ for all $\lambda$ as is $\pi(\mathcal{A},\lambda)$ with the exception of $\lambda =\mathrm{sgn}(-A)$ where 
\begin{align*}
A=(-1)^{[(f+1)/2](p-1)/2}(-1)^{f+1}(-1)^{(p^{2}-1)f/8}
\end{align*}
with $f=\mathrm{log}_{p}(q)$. Finally no other supercuspidal representations attached to unramified $K/F$ are in $\mathcal{R}_\chi (\tilde{G})$.
\item Let $K=F[\alpha]/F$ be ramified with $\alpha$ $K/F$-minimal. Then $\pi(A,\lambda ,\alpha)$ is in $\mathcal{R}_\chi (\tilde{G})$ if and only if there exists a $w$ in $W$ such that $\varpi_{F}\sigma_{1}(w)w=-2\alpha$.
\item Assume that $K/F$ is ramified or that $q=1\mathrm{mod}4$. If $\pi(A,\lambda ,\alpha)$ is in $\mathcal{R}_\chi(\tilde{G})$ then the corresponding representation in $\mathcal{R}_\chi(O(V_{1}))$ is $\pi(\mathcal{A},\lambda^{2},\alpha/2,\,\lambda(-1),\lambda(-1))$. If $\pi(\mathcal{A},\lambda)$ is in $\mathcal{R}_\chi(\tilde{G})$ with $\lambda\neq\pm$, then the corresponding representation in $\mathcal{R}_\chi(O(V_{1}))$ is $\pi(\mathcal{A},\lambda^{2},\lambda(-1),\lambda(-1))$. The occurring $\pi(\mathcal{A},\pm)$ corresponds to a non-supercuspidal representation of $O(V_{1})$.
\item Assume that $K/F$ is unramified and that $q\equiv3\mathrm{mod}4$. Then if $\pi(\mathcal{A},\lambda ,\alpha)$ is in $\mathcal{R}_{\chi}(\tilde{G})$, then the corresponding representation in $\mathcal{R}_{\chi}(O(V_{1}))$ is $\pi(\mathcal{A},\lambda^{2},\gamma_{1},\lambda(-1))$ for some $\gamma_1$. If $\pi(A,\lambda)$ is in $\mathcal{R}_\chi (\tilde{G})$ with $\lambda\neq\pm$ then the corresponding representation $\mathcal{R}_{\chi}(O(V_{1}))$ is $\pi(\mathcal{A},\lambda^{2},\gamma_{1},\lambda(-1))$ for some $\gamma_1$. The occurring $\pi(\mathcal{A},\pm)$ pairs with a non-supercuspidal representation of $O(V_{1})$.
\end{enumerate}
\end{theorem}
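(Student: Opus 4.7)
The plan is to work in an explicit lattice model of $\omega_\chi^\infty$ on $W = \mathrm{Hom}_F(V, V_1)$, pick a single test vector $y_w$ (or $y_{w,x}$) adapted to the type $\rho(\mathcal{A}, \lambda, \alpha)$ on the $\tilde G$ side, and then read off both the occurrence of $\pi$ in $\mathcal{R}_\chi(\tilde G)$ and the identity of its theta-lift by pushing that same vector through the $O(V_1)$-action. Since every type in question is induced from an open, compact-mod-center subgroup, Frobenius reciprocity turns occurrence into the production of a vector in the lattice model that transforms according to the inducing representation, and Propositions \ref{prop3a} and \ref{prop3b} are precisely the tools that compute the required action in the non-self-dual and self-dual lattice models respectively.

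First I would fix a lattice $L \subseteq W$ built from the $\mathcal{O}_K$-lattice chain on $V$ and the natural lattice on $V_1$ coming from $M_2(\mathcal{O}_F) \cap V_1$. Whether the resulting $L$ is self-dual in $W$, or only satisfies $P_F L^* \subseteq L \subsetneq L^*$, is controlled by the parity of $\nu_K(\alpha)$ together with whether $\mathcal{O}_K$ itself is self-dual in $V$; this dichotomy is the source of the two sub-cases of part (i). Choose $w \in W$ supported on the appropriate $K^1$-orbit, so that for $h$ in the Cayley neighborhood of $1$ inside $K^1 U^m(\mathcal{A}_F)$, Proposition \ref{prop3b} gives $\omega_\chi(h) y_w = \chi(\langle hw,w\rangle/2) y_w$. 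Matching $\langle (h-1)w,w\rangle/2$ against $\mathrm{tr}(\alpha(h-1))$ from the definition of $\chi_\alpha$ is exactly what produces the solvability condition $\varpi_F \sigma_1(w)w = -2\alpha$ of part (ii) in the ramified case, and the parallel computation via Proposition \ref{prop3a} on a non-self-dual $L$ when $\nu_K(\alpha)$ is even produces the exclusion $\lambda = \mathrm{sgn}(-A)$ in part (i), once the scalar entering the residual finite Weil representation on $SL_2(k)$ is identified with the explicit Gauss-sum constant $A$.

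For parts (iii) and (iv) I would take the test vector $y_w$ already produced and compute the action on it of coset representatives $\beta_1, \beta_2$ for $O(V_1)/PSL_2(F)$ together with $-I$; each acts by a scalar that pins down $\gamma_1$ and $\gamma_2$. The appearance of $\lambda^2$ and $\alpha/2$ on the orthogonal side is forced by the fact that $O(V_1)$ acts on $V_1 \subset M_2(F)$ by conjugation, so $K^1$ enters the orthogonal type through the squaring map, together with the built-in factor of $2$ in the form $\langle \cdot,\cdot\rangle/2$ of Proposition \ref{prop3b}. The main obstacle, and the reason part (iv) must be separated from part (iii), is the case $q \equiv 3 \bmod 4$ with $K/F$ unramified: there $\gamma_1$ is no longer simply $\lambda(-1)$, and one must track the scalar by which the primitive $2^{t+1}$-root of unity $\beta \in K$ of norm $-1$ acts on $y_w$ in the lattice model, then match that scalar against one of the two square roots of $\lambda(\beta^2)$ provided by the definition of $\gamma_1$. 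All other scalars reduce to quadratic-residue identities over $k$ that fall out directly from Propositions \ref{prop3a} and \ref{prop3b}.
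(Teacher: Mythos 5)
First, a point of comparison that matters here: the paper does not prove Theorem \ref{theo3.3} at all --- it is quoted from \cite{M6}, as the attribution in the statement and the opening sentence of Section 3 make explicit --- so there is no in-paper argument to measure yours against. That said, your strategy (realize the type $\rho(\mathcal{A},\lambda,\alpha)$ on an explicit eigenvector in a lattice model for $W=\mathrm{Hom}_F(V,V_1)$, convert occurrence into the existence of such a vector via Frobenius reciprocity, and compute the actions of $K^1U^{m}(\mathcal{A}_F)$ and of $\beta_1$, $\beta_2$, $-I$ via Propositions \ref{prop3a} and \ref{prop3b}) is exactly the toolkit of \cite{M1}, \cite{M2} and \cite{M6}, and is the same method the present paper deploys in the proof of Theorem \ref{Theo4.2}. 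The positive assertions --- occurrence in the stated cases, the solvability condition $\varpi_F\sigma_1(w)w=-2\alpha$, and the identification of the parameters $\lambda^2$, $\alpha/2$, $\gamma_2=\lambda(-1)$ --- would follow from your computation, granting Howe duality (known for $p$ odd) so that ``the corresponding representation'' is well defined; you assume this silently and should say so.

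The genuine gap is that a single-test-vector argument proves only the ``if'' halves. Parts (i) and (ii) also assert non-occurrence: the ``only if'' in (ii), the exclusion of $\lambda=\mathrm{sgn}(-A)$, and above all the blanket claim that no other supercuspidal representations attached to unramified $K/F$ lie in $\mathcal{R}_\chi(\tilde{G})$. Exhibiting one vector that transforms correctly shows the relevant $\mathrm{Hom}$-space is nonzero; showing that no such vector exists requires either a vanishing computation over all of $W$ (not just the orbit you chose) or, as is actually done in this circle of papers, an appeal to Waldspurger's dichotomy \cite{W} --- each genuine supercuspidal $\tilde{\pi}$ lifts to exactly one of $O(V_1)$, $O(V_2)$ with $V_2$ the anisotropic ternary space --- combined with a count matching the supercuspidal spectra on the two sides. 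Your sketch never mentions $O(V_2)$ or any counting or multiplicity argument, so as written it cannot rule anything out, and the ``if and only if'' and exhaustion statements remain unproved.
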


\section{Explicit realization of types via lattice models and the theorem}

Let $E/F$ be a quadratic extension and set $V=\{A\in M_{2}(E)|\bar{A}^{t}=A\}$ where $\bar{A}^t$ denotes the conjugate transpose of $A$ with conjugation entrywise by the Galois action of $E/F$. Then the negative of the determinant map defines a Witt rank one quadratic form over $F$. Let $H=O(V)$.

Recall the structure of $H$. First let $H'=GO(V)$. Define $\Psi:GL_{2}(E)\times F^{\times}\rightarrow\mathrm{End}_{F}(V)$ by 
\begin{align*}
\Psi(g,u)A=\left(
\begin{array}{cc}
u & o \\
o & u
\end{array}
\right)
gA\bar{g}^t
\end{align*}
Then $H'\cong\mathrm{Im}\Psi\rtimes\langle\sigma\rangle$ where $\sigma$ is the element of $H$ corresponding to conjugation. Now consider the restriction of $\Psi$ to those elements of the form $(g,a)$ with $N(\mathrm{det}g)a^{2}=1$, call this group $H_1$. Then $\Psi(H_{1})\cong SO(V)$ and we have $H\cong\Psi(H_{1})\rtimes\langle\sigma\rangle$. Then we identify $PSL_{2}(E)$ as a subgroup of $H$ via $g\mapsto\Psi(g,1)$. Note that $H/PSL_{2}(E)$ is abelian of type (2,2,2) and we can take generators to be $\sigma$ and $\beta_{i},i=1,2$ where the spinor norm of $\beta_{1}$ is the class in $F^{x}/(F^{x})^2$ associated to a nonsquare unit and the spinor of $\beta_{2}$ is associated to an element of order one. Note further that the $\beta_{i}$ can be taken to be of the form $(g,a)$ with $g$ in $GL_{2}(F)$. We can further take $g$ to be zero on the diagonal and the $\Psi(g,1)$ becomes conjugation by $g$.

We will be interested in the supercuspidal representations of $H$ in $\mathcal{R}_\chi(H)$ which are summands of $\mathrm{Ind}(H,PSL_{2}(E),\Pi)$ for $\Pi$ a supercuspidal representation of $PSL_{2}(E)$ that arises from base change for $E/F$. For $\pi$ a supercuspidal representation of $SL_{2}(F)$, let $L(\pi)$ denote its associated $L$-packet and let $bc(L(\pi))$ denote the associated $L$-packet for $PSL_{2}(E)$ obtained by base change. 

Suppose $\pi$ is a supercuspidal representation of $SL_{2}(F)$ such that $bc(L(\pi))$ consists of supercuspidal representations. Then $\pi$ is  of the form $\pi(A,\lambda ,\alpha)$ with $E[\alpha]/F$ biquadratic or $\pi$ is of the form $\pi(A,\lambda)$ with $E/F$ ramified and $\lambda\neq\pm$. In the case of $\pi(A,\lambda)$ let $K$ be the unramified extension of $F$ on which $\lambda$ is defined. Then let $bc(\mathcal{A})$ be the order associated to the lattice chain associated to $E[\alpha]/F$ in the former case and $KE$ in the latter case.

\begin{lemma}\label{Lem4.1}
Let $\pi$ be an irreducible supercuspidal representation of $SL_{2}(F)$ such that $bc(L(\pi))$ consists of supercuspidal representations. 

\begin{enumerate}
\item If $\pi=\pi(\mathcal{A},\lambda ,\alpha)$ then $bc(L(\pi))$ contains \newline $\pi(bc(\mathcal{A})),\lambda_{\,^{\;\circ}} N_{E[\alpha]/F[\alpha]},\alpha/2)$
\item If $\pi=\pi(\mathcal{A},\lambda)$ then $bc(L(\pi))$ contains $\pi(bc(\mathcal{A}),\lambda_{\,^{\;\circ}} N_{LE/L})$
\end{enumerate}
\end{lemma}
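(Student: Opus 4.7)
The plan is to realize base change explicitly via characters by passing through $GL_2$. Recall that the $L$-packet of a supercuspidal $\pi(\mathcal{A},\lambda,\alpha)$ of $SL_2(F)$ is obtained by extending $\lambda$ to a character $\tilde{\lambda}$ of $K^\times$, producing a supercuspidal $\pi(\tilde{\lambda})$ of $GL_2(F)$ whose restriction to $SL_2(F)$ lies in $L(\pi)$; the various admissible extensions sweep out the members of $L(\pi)$. Base change for $SL_2$ then factors through $GL_2$ base change, which by local class field theory sends $\tilde{\lambda}$ to $\tilde{\lambda}\circ N_{KE/K}$.

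For case (i), the hypothesis that $bc(L(\pi))$ is supercuspidal forces $KE = E[\alpha]$ to be a genuine quadratic extension of $E$, putting us in the biquadratic setting. Fixing any extension $\tilde{\lambda}$, the $GL_2(E)$ base change of $\pi(\tilde{\lambda})$ is the dihedral supercuspidal $\pi(\tilde{\lambda}\circ N_{KE/K})$. Restricting to $SL_2(E)$ and descending to $PSL_2(E)$ gives an $L$-packet whose character parameter on $(KE)^1$ is the restriction of $\tilde{\lambda}\circ N_{KE/K}$. Transitivity of norms shows that $N_{KE/K}$ sends $(KE)^1$ into $K^1$, so this restriction equals $\lambda\circ N_{E[\alpha]/F[\alpha]}$ and is independent of the chosen extension $\tilde{\lambda}$. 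The order parameter becomes $bc(\mathcal{A})$ since the $\mathcal{O}_K$-lattice chain in $V$ extends canonically under $\otimes_F E$ to the $\mathcal{O}_{KE}$-lattice chain in $V\otimes_F E$. For the minimal element, the stratum character $\chi_\alpha(1+y)=\chi(\mathrm{tr}_{K/F}(\alpha y))$ transforms under $N_{KE/K}$, using $\chi_E=\chi\circ\mathrm{tr}_{E/F}$ and transitivity of trace, into a stratum character on $U^{m'}(\mathcal{A}_{KE})$; matching the $SL_2$ and $PSL_2$ normalizations (the same factor of two that appears in Theorem~\ref{theo3.3}(iii), of a piece with the Cayley-transform factor $2c(h)$ in Proposition~\ref{prop3a}) identifies the minimal element as $\alpha/2$.

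Case (ii) proceeds along the same lines and is in fact easier, as the representation has depth zero and no minimal element need be tracked. With $K/F$ unramified, $\lambda\neq\pm$, and $E/F$ ramified, $KE/E$ is unramified of degree two; the same $GL_2$ base change argument yields the character $\lambda\circ N_{LE/L}$ (with $L=K$) on $(KE)^1$ together with the order parameter $bc(\mathcal{A})$. The principal obstacle throughout is the bookkeeping for the minimal-element shift in (i): pinning down the $\alpha/2$ requires careful tracking of the $SL_2$-versus-$PSL_2$ normalization, the compatibility $\chi_E=\chi\circ\mathrm{tr}_{E/F}$ between the additive characters of $F$ and $E$, and the trace-zero condition imposed on $K/F$-minimal elements.
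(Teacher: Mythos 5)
Your argument follows essentially the same route as the paper: case (i) is the $GL_2$ lifting theorem of Pantoja (the dihedral base change $\tilde{\lambda}\mapsto\tilde{\lambda}\circ N_{KE/K}$ on parametrizing characters) combined with the compatibility $\chi_E=\chi_F\circ \mathrm{Tr}_{E/F}$ and a trace/norm identity that produces the minimal element $\alpha/2$, while case (ii) is Shintani's depth-zero base change. The only cosmetic divergence is that you partly attribute the factor $1/2$ to an $SL_2$-versus-$PSL_2$/Cayley-transform normalization as in Proposition \ref{prop3a}, whereas the paper obtains it directly from the displayed chain of equalities $\chi_{F}(\mathrm{tr}\,\alpha\, \mathrm{Tr}_{E/F}(x-1))=\chi_{E}(\mathrm{tr}((\alpha/2)(x-1)))$; the substance is the same.
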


\begin{proof}
The first case follows from Definition 2.1.4 and Theorem 4.10.1 of \cite{P} and the equality that 
\begin{eqnarray}
\chi_{F}(tr\alpha Tr_{E/F}(x-1))&=&\chi_{F}(tr\circ Tr_{E/F}((\alpha /_{2})(x-1)))\nonumber \\
&=&\chi_{F}(Tr_{E/F}(tr((\alpha/2)(x-1)))\nonumber \\
&=&\chi_{E}(tr((\alpha/2)(x-1)))\nonumber
\end{eqnarray}
The second case follows from \cite{Sh}.
\end{proof}

We now state and prove the main result.

\setcounter{theorem}{1}
\begin{theorem}\label{Theo4.2}
Let $\pi$ be a supercuspidal representation of $G$ occurring in $\mathcal{R}_\chi(G)$. Suppose that $bc(L(\pi))$ consists of supercuspidal representations. Then $\theta_{\chi}(\pi)|_{PSL_{2}(E)}$ consists of representations in $bc(L(\pi))$.
\end{theorem}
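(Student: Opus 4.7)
The plan is to exhibit, inside a suitably chosen lattice model of $\omega^{\infty}_{\chi}$ for the pair $(G, O(V))$, a single vector $y$ that simultaneously realizes the $G$-type of $\pi$ and the $PSL_{2}(E)$-type of the expected base-change representation $\Pi$. By Frobenius reciprocity together with the uniqueness of types given in Theorems \ref{Theo1.1} and \ref{theo2.1}, this forces $\Pi$ to appear in $\theta_{\chi}(\pi)|_{PSL_{2}(E)}$; a Clifford-theory argument then shows the full restriction is a sum of $O(V)$-conjugates of $\Pi$, all lying in $bc(L(\pi))$.

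By Lemma \ref{Lem4.1} there are two cases. In Case (i), $\pi = \pi(\mathcal{A}, \lambda, \alpha)$ and $\Pi = \pi(bc(\mathcal{A}), \lambda \circ N_{E[\alpha]/F[\alpha]}, \alpha/2)$; set $K = F[\alpha]$ and $K' = E[\alpha]$. In Case (ii), $\pi = \pi(\mathcal{A}, \lambda)$ and $\Pi = \pi(bc(\mathcal{A}), \lambda \circ N_{KE/K})$, where $K$ is the unramified quadratic extension on which $\lambda$ lives and $K' = KE$. In either case $K'$ is the biquadratic extension carrying $bc(\mathcal{A})$.

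First I would apply the small theta correspondence for $(G, O(V_{1}))$, as summarized in Theorem \ref{theo3.3}, to produce, in a lattice model for $(G, O(V_{1}))$ attached to the $\mathcal{O}_{K}$-chain, a type-realizing vector $y_{1}$ carrying the $G$-type $\tilde{\rho}(\mathcal{A}, \lambda, \alpha)$ (respectively $\tilde{\rho}(\mathcal{A}, \lambda)$) of $\pi$ and the corresponding $PSL_{2}(F) \subset O(V_{1})$ type. Next, blend this with a lattice model for $(G, O(V))$ via the $E$-Hermitian structure: using the embedding $V_{1} \otimes_{F} E \hookrightarrow M_{2}(E)$, extend the $\mathcal{O}_{F}$-lattice underlying $y_{1}$ to an $\mathcal{O}_{F}$-lattice $L$ in $W = \mathrm{Hom}_{F}(W_{1}, V)$ adapted to the $\mathcal{O}_{K'}$-chain of $bc(\mathcal{A})$, and let $y \in Y_{L}$ be the natural extension of $y_{1}$. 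I would then use Proposition \ref{prop3a} to compute the $PSL_{2}(E)$-action on $y$; the Cayley-function formula together with the trace-change identity
\begin{equation*}
\chi_{F}(tr\,\alpha\, Tr_{E/F}(x-1))=\chi_{E}(tr((\alpha/2)(x-1)))
\end{equation*}
from the proof of Lemma \ref{Lem4.1} identifies the $E$-side character attached to $(\alpha/2, \lambda \circ N)$ with the $F$-side character attached to $(\alpha, \lambda)$. The uniqueness part of Theorem \ref{theo2.1} then recognizes the $PSL_{2}(E)$-action on $y$ as the type of $\Pi$, completing the matching.

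The hard part will be the lattice-blending step: constructing $L$ so that $y_{1}$ extends to a vector $y$ simultaneously compatible with the $G$-type and the $PSL_{2}(E)$-type. The metaplectic cocycle splittings over the two compact subgroups must agree on their intersection, and the $\mathcal{O}_{F}$-chain $\mathcal{A}$ must be aligned with the $\mathcal{O}_{K'}$-chain $bc(\mathcal{A})$ through the biquadratic tower $F \subset E \subset K'$ (or $F \subset K \subset K'$). Once this lattice compatibility is secured, the character computations reduce to routine applications of Propositions \ref{prop3a} and \ref{prop3b}.
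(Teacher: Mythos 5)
Your overall strategy---realizing the type of $\pi$ explicitly via the auxiliary pair $(G,O(V_1))$, blending that lattice model into one for $(G,O(V))$, and computing the $PSL_2(E)$-action with Proposition \ref{prop3a} and the trace identity from Lemma \ref{Lem4.1}---is essentially the paper's. But there is a genuine gap at your very first step. You propose to ``apply the small theta correspondence for $(G,O(V_1))$\dots to produce\dots a type-realizing vector $y_1$,'' yet Theorem \ref{theo3.3} shows this is not always possible: many supercuspidal representations of $\tilde G$ do not occur in $\mathcal{R}_\chi(\tilde G)$ for the split ternary pair at all (the ramified case requires a $w$ with $\varpi_F\sigma_1(w)w=-2\alpha$, and several unramified cases are explicitly excluded). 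The hypothesis $\pi\in\mathcal{R}_\chi(G)$ concerns the four-dimensional pair $(G,O(V))$ and does not force occurrence for $(G,O(V_1))$. The paper handles this by invoking Waldspurger's dichotomy: the genuine extension $\tilde\pi$ pairs with exactly one of the split ternary space $V_1$ and the anisotropic space $V_2=D^0$, and in the second case one rescales the quadratic form on $V$ by a scalar so that the ternary subspace used in the blending is one for which $\tilde\pi$ does occur. Without this case division your construction of $y_1$ simply fails for a large class of the representations covered by the theorem.

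A second, smaller defect is that you defer the ``lattice-blending step'' as ``the hard part'' without supplying the mechanism, and that is where the actual content lies. The paper's device is concrete: the identity $gA\bar g^{\,t}=gAwg^{-1}w^{-1}$ for $g\in SL_2(F)$ identifies $V_1'=\{Aw : \bar A=A\}\subset V$ with the split ternary space and exhibits $PSL_2(F)\subset O(V_1)$ sitting inside $PSL_2(E)\subset O(V)$; one then takes the lattice for $Sp(W)$ in the form $M\otimes(\phi(L_1)\oplus L_2)$ with $L_2$ a lattice in the orthogonal complement $V_1''$, and it is precisely the orthogonality of $\phi(L_1)$ and $L_2$ that makes Proposition \ref{prop3a} applicable and yields the order and minimal-element parameters of the lift (the compact-torus parameter coming from the triviality of the $E^1$-action). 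Your proposed extension of $y_1$ to a lattice ``adapted to the $\mathcal{O}_{K'}$-chain of $bc(\mathcal{A})$'' gestures at the same compatibility but does not establish it; as written, the step you yourself flag as hard is left unresolved, and the uniqueness-of-types argument cannot be run until it is.
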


\begin{proof}
Let $\rho$ be a representation of the form $\rho(A,\lambda)$ or $\rho(A,\lambda ,\alpha)$ of a compact open subgroup $H$, as above, so that $\pi\cong\mathrm{Ind}(G,H;\rho)$. Then the cocycle defining $\tilde{G}$ splits on $\tilde{H}$ so $\rho$ can be extended to a representation $\tilde{\rho}$ on the trivial covering $\tilde{H}$ by setting $\tilde{\rho}((h,z))=z\rho(h)$ where $h$ is in $H$ and $z=\pm1$. Let $\tilde{\pi}=\mathrm{Ind}(\tilde{G},\tilde{H};\tilde{\rho})$ be the corresponding irreducible genuine representation of $\tilde{G}$. 

Let $D$ denote the non-split quaternion algebra over $F$ and let $D^0$ denote the (reduced) trace zero elements in $D$. Then the negative of the reduced norm gives rise to an anisotropic quadratic form on $D^0$. Let $V_{2}=D^0$. Then it is a result of Waldspurger \cite{W} that $\tilde{\pi}$ has a non-zero theta lift to one and only one of $O(V_i)$, $i=1,2$, for $\chi$.

First assume this occurs for $i=1$. Then note that for $g$ in $SL_{2}(F)$
\begin{eqnarray}
gA\overline{g}^{t}=gAwg^{-1}w^{-1}
\label{Eq1}
\end{eqnarray}
for $A$ in $V$ where $w=\left(\!\!
\begin{array}{rr}
0&1\\
-1&0
\end{array}
\right)$. 
In particular this holds for $A$ with the property that $\overline{A}= A$; let $V'_{1} =\{Aw|A\in V \textrm{with }\overline{A} =A\}$. Then one checks that $V'_{1}\cong V_{1}$. Further using (\ref{Eq1}) one checks that $PSL_{2}(F)$ as a subgroup of $O(V_{1})$ is equal to $PSL_{2}(F)$ on a subgroup of $PSL_{2}(E)$ when $O(V_{1})$ is viewed as a subgroup of $O(V)$.

Now suppose $\tilde{\pi}$ is of the form $\pi(A,\lambda)$. For the associated $L$-packet to remain supercuspidal we must have that $E/F$ is ramified. Then viewing $\lambda$ as a character of $K^1$ where $K/F$ is unramified we get that the representation corresponding to $\tilde{\pi}$ in $\mathcal{R}_{\chi}(O(V_{1}))$ is of the form $\pi(\mathcal{A},\lambda^{2},\gamma_{1},\gamma_{2})$ by Theorem \ref{theo3.3}. Then the result follows from Lemma \ref{Lem4.1}, our choice of $V_{1}'$, the equality $\lambda_{\,^{\;\circ}}N_{KE/K}(e)=\lambda^{2}(e)$ for $e$ in $E^1$, and the equality $SL_{2}(k_{F})=SL_{2}(k_{E})$.

Now suppose $\pi$ is of the form $\pi(\mathcal{A},\lambda ,\alpha)$. Then the representation corresponding to $\tilde{\pi}$ in $\mathcal{R}_{\chi}(O(V_{1}))$ is of the form $\pi(\mathcal{A},\lambda^{2},\alpha /2,\gamma_{1},\gamma_{2})$ by Theorem \ref{theo3.3}.  Let the subgroups of $\tilde{G}$ and $O(V_{1})$ where $\tilde{\rho}(\mathcal{A},\lambda ,\alpha)$ and $\rho(\mathcal{A},\lambda^{2},\alpha /2)$ are defined be $\tilde{H}_{1}$ and $H_2$. Consider the representation $\rho'=\tilde{\rho}(\mathcal{A},\lambda ,\alpha)\otimes\rho(\mathcal{A},\lambda^{2},\alpha /2)$ of $\tilde{H}_{1}H_{2}$ inside the ambient symplectic group for the relevant Weil representation. Examining the proof of Theorem 3.5 in \cite{M1} we see that the space of $\rho'$ can be taken to consist of functions with support in $W_{1}\otimes V_{1}$ in a lattice model, where we recall that $W_{1}$ is the symplectic space for $SL_{2}(F)$. We may take the lattice to be of the form $M\otimes L_{1}$ where $M$ is a lattice in $W_1$ and $L_1$ is a lattice in $V_1$. We then realize the Weil representation $\omega_{\chi}^{\infty}$ for $Sp(W)$ in a lattice model attached to a lattice of the form $M\otimes(\phi(L_{1})\oplus L_{2})$ where \linebreak $\phi :V_{1}\rightarrow V_{1}'$ is the isomorphism indicated above and $L_2$ is a lattice in $V_{1}''$. Then our statements about the defining order parameter and minimal element parameter follow from Proposition \ref{prop3a} since $\phi(L_{1})$ and $L_2$ are orthogonal. The statement for the parameter on the compact torus follows from the fact the action of $E^1$ is trivial. 

Now suppose $i=2$. Then the argument proceeds as above but we alter the isotropic form attached to $V$ by multiplication by a scalar so that $\tilde{\pi}$ does occur.
\end{proof}

\end{document}